\newcommand{\precu}{{\bar U}}
\newcommand{\precl}{{\bar L}}
\newcommand{\precd}{{\bar D}}
\newcommand{\precf}{{\bar F}}
\newcommand{\precc}{{\bar C}}
\newcommand{\be}{{\bf e}}
\def \be{\begin{equation}}
\def \ee{\end{equation}}
\def \bea{\begin{eqnarray}}
\def \eea{\end{eqnarray}}
\def \bean{\begin{eqnarray*}}
\def \eean{\end{eqnarray*}}
\newtheorem{lemma}{Lemma}[section]
\newtheorem{definition}{Definition}[section]
\newenvironment{proof}[1][Proof]{\textbf{#1.} }{\ \rule{0.5em}{0.5em}}
  \thanks{Laboratoire J.L. Lions, CNRS UMR7598, Universite Paris 6,
    France ({\tt nataf@ann.jussieu.fr}).}  }
\begin{document}
\RRNo{7569}
\makeRR

\section{Introduction} 

Iterative methods are widely used in industrial applications, and
preconditioning these methods is an important topic which has already
been extensively studied \cite{MR2000f:65003, Saad:1996:IMS,
  Benzi:1999:ACS}.  In this context, algebraic multigrid methods are
very successful for certain classes of applications, in particular
scalar PDEs \cite{MR972756, MR1820878, MR1807961,
  MR1370108, MR1835471}.  They are known to have good weak scalability
properties, but they are not strongly scalable.  This motivates
research on iterative solvers for systems of PDEs and/or large number
of processors.

Several highly used preconditioners, as the incomplete LU
factorizations and domain decomposition methods, are known to have
scalability problems, in terms of both problem size and number of
processors. This is often due to the presence of several low frequency
modes that hinder the convergence of the iterative method.  To solve
this problem, a different class of so called filtering preconditioners
has been proposed \cite{MR2008372, Achdou:2007:LFT, Wagner:1997:TFFU,
  Wagner:1997:AF}, where the choice of the filtering vector is made to
alleviate the effect of low frequency modes on the convergence.  For
domain decomposition methods, coarse grid correction is known to be
mandatory for solving the scalability problem \cite{MR2104179}.

In this paper we focus on the generalization and suitability for
parallel computing of the filtering preconditioner.  This
preconditoner is an incomplete factorization where it is possible to
ensure that the factorization will coincide with the original matrix
for some specified vector, called a filtering vector. Satisfying this
filtering condition is an important factor for accelerating the
convergence of the iterative method.  The previous research on these
methods considered only matrices arising from the discretization of
PDEs on structured grids, where the matrix has a block tridiagonal
structure \cite{MR2008372, Achdou:2007:LFT, Wagner:1997:TFFU,
  Wagner:1997:AF}. To the best of our knowledge, there was no previous
result on the parallelization of filtering preconditioners.  One of
the important results of this research is the development of a new and
general approach to ensure that a filtering condition is satisfied in
a matrix decomposition. This approach is based on an innovative way of
organizing the computations that allows on one side to satisfy a
filtering property and on another side to perform a parallel
computation. This approach has been used to develop a preconditioner
based on a block approached decomposition, that we refer to as block
filtering preconditioner.  While we discuss in detail the right
filtering property $A t = Mt$, a similar approach can be used to
develop a preconditioner that satisfies the left filtering property
$t^T A = t^T M$, where $A$ is the input matrix, $M$ is the
preconditioner and $t$ is the filtering vector.

This preconditioner does not impose any particular structure on the
input matrix.  To allow its usage on parallel architectures, the input
matrix can be reordered using nested dissection.  This reordering
allows a parallel implementation of the construction of the
preconditioner, as well as of the iterative process.

The preconditioner can be seen as a generalization for unstructured
grids of the preconditioner presented in \cite{Achdou:2007:LFT} for
block tridiagonal matrices.  In contrast to the preconditioner
presented in \cite{Achdou:2007:LFT} that has been shown to be
efficient in combination with ILU0, the block preconditioner presented
here is efficient as a stand-alone preconditioner.

The goal of this
paper is only to present the algebraic framework which allows a
filtering condition to be satisfied in an incomplete block
factorization. The numerical results showing the efficiency of the
proposed preconditioner and its parallel performance will be presented
in a future paper.

\section{Block Filtering Decomposition}
\label{sec:blockfilter}

In this section we describe a block filtering preconditioner $M$ which
satisfies the right filtering condition $(M-A)t =0$, where $t$ is a
filtering vector.

Consider a matrix $A$ of size $n \times n$ partitioned into a block
matrix of size $N \times N$ with square diagonal blocks (not
necessarily of a same size)
\[
A = 
  \begin{pmatrix}
    A_{11} & \ldots  &  A_{1N} \\
    \vdots & \ddots & \vdots \\
    A_{N1} & \ldots  &  A_{NN} \\
  \end{pmatrix}.
\]
An exact block $LDU$ factorization of $A$ is written as {\small
\begin{equation}
     \label{eq:LDU}
A =\left(\begin{array}{cccc}
       D_{11}    &  &          &          \\
       L_{21} & D_{22}    &   &          \\
         \vdots     & \ddots & \ddots   &  \\
       L_{N1}       & \ldots       & L_{N,N-1} & D_{NN}
       \end{array}
     \right)
\left(\begin{array}{cccc}
       D_{11}^{-1}    &  &          &          \\
        & D_{22}^{-1}    &   &          \\
              & & \ddots   &  \\
              &        &  & D_{NN}^{-1}
       \end{array}
     \right)
\left(\begin{array}{cccc}
       D_{11}     & U_{12} &  \ldots  & U_{1N} \\
        & \ddots  & \ddots  & \vdots \\
        &  & D_{N-1,N-1}    & U_{N-1,N} \\
              &        &  &  D_{NN}
       \end{array}
     \right),
\end{equation}
}where $D_{ii}, i = 1 \ldots N$ are square invertible matrices of size
$b_i \times b_i$ with $b_i < n$.  Let $D = \hbox{Block-Diag} (D_{11},
\ldots,D_{NN})$, and let 
\begin{displaymath}
L= \left(\begin{array}{cccc}
       0    &  &          &          \\
       L_{11} & \ddots    &    &          \\
       \vdots   & \ddots &   &  \\
       L_{N1} &  \ldots      & L_{N,N-1} & 0
       \end{array}
     \right),
\ \ 
    U = \left(\begin{array}{cccc}
       0    &  U_{11} &  \ldots       &  U_{1, N}   \\
       & \ddots    & \ddots  & \vdots     \\
              & & \ddots   &  U_{N-1, N}\\
              &        &  & 0
       \end{array}
     \right).
\end{displaymath}
The factorization (\ref{eq:LDU}) can be written as $A = ( L + D)
D^{-1} ( U + D)$.  In the following we refer to the blocks of $L, D,
U$ as $C$, where $L_{ij} = C_{ij}$ if $i > j$, $U_{ij} = C_{ij}$ if $i
< j$, and $D_{ij} = C_{ij}$ if $i = j$.  In other words, the matrix
$C$ can be written as $C = L + D + U$.  The blocks of $L, D$, and $U$
are computed using the following formula, with $i,j = 1 \ldots N$:
\begin{eqnarray}
\label{eq:blockLDU}
C_{ij} &=& 
\left\{
\begin{array}{ll}
A_{ij} \hspace{ .2cm}i= 1 \ or\ j=1 \\
A_{ij} - \sum_{k=1, L_{ik} \neq 0,U_{kj} \neq 0}^{\min(i,j)-1} L_{ik} D^{-1}_{kk}  U_{kj}, \hspace{.2cm} i > 1 \ or\ j > 1
\end{array}
\right.
\end{eqnarray}

In practice, even if the matrix $A$ is very sparse, the factors $L, D,
U$ can be much denser.  In particular the term $L_{ik} D^{-1}_{kk}
U_{kj}$ can introduce an important amount of fill-in in the factors.
In our work, the goal is to approximate the inverse of the diagonal
blocks $D^{-1}_{kk}, k = 1 \ldots n$ by a sparse matrix such that
$L_{ik} D^{-1}_{kk} U_{kj}$ stays sparse.  In the context of filtering
decomposition, there are mainly two approximations used.  Consider a
diagonal block $D_{kk}$.  The first approach consists of approximating
$D^{-1}_{kk}$ by a sparse matrix $\precf = \beta$, chosen such that a
filtering condition is satisfied.  The second approach aims at
identifying a better approximation of $D^{-1}_{kk}$ starting from
$\beta$.  As described in section \ref{sec:constrbeta}, this leads to
an approximation of the form $\precf = 2 \beta - \beta D_{kk} \beta$
\cite{Achdou:2007:LFT}, where $\beta$ is a diagonal matrix.  We will
discuss both approaches, but we note that the first approach is more
stable and leads to better results in practice.

In the following we explain the construction of the block filtering
preconditioner $M$.  We first give its definition, and then explain
more in detail the reasoning that lead to its construction.  In
section \ref{sec:constrbeta} we discuss the construction of the
approximation ${\precf}$ of the inverse of the block diagonal
matrices.

\begin{definition}
Let $A$ be a matrix of size $n \times m$.  For $k = 1 \ldots N$, let
$L_k$ be a matrix of size $n \times n_k$, $D_k$ be an invertible
matrix of size $n_k \times n_k$, and $U_k$ be a matrix of size $n_k
\times m$.  Let $M$ be a matrix defined by
\[
 M - A =  \sum_{k = 1}^{N} L_k D_k^{-1} U_k - \sum_{k = 1}^{N} L_k F_k U_k.
\]
A construction that enabes filtering is a construction where $F_k$, $k
= 1 \ldots N$ are matrices that satisfy the relation
\begin{equation}
F_{k} U_{k} t = D_{k}^{-1} U_{k} t  \ \ for\ all\ k=1:N 
\end{equation}
\end{definition}

\begin{definition}
\label{def:bfd}
Let $t$ be a filtering vector of size $n$ and let $A$ be a matrix of
size $n \times n$ partitioned into a block matrix of size $N \times
N$.  A block filtering decomposition is defined as
{\small
\begin{equation}
     \label{eq:BFD1}
M =\left(\begin{array}{cccc}
       {\precd}_{11}    &  &          &          \\
       {\precl}_{21} & {\precd}_{22}    &   &          \\
         \vdots     & \ddots & \ddots   &  \\
       {\precl}_{N1}       & \ldots       & {\precl}_{N,N-1} & {\precd}_{NN}
       \end{array}
     \right)
\left(\begin{array}{cccc}
       {\precd}_{11}^{-1}    &  &          &          \\
        & {\precd}_{22}^{-1}    &   &          \\
              & & \ddots   &  \\
              &        &  & {\precd}_{NN}^{-1}
       \end{array}
     \right)
\left(\begin{array}{cccc}
       {\precd}_{11}     & {\precu}_{12} & \ldots  & {\precu}_{1N} \\
        & \ddots  & \ddots  & \vdots \\
        &  & {\precd}_{N-1,N-1}    & {\precu}_{N-1,N} \\
              &        &  & {\precd}_{NN}
       \end{array}
     \right),
\end{equation}
}
where ${\precd}_{ii}, i = 1 \ldots N$ are square invertible matrices
of size $b_i \times b_i$ with $b_i < n$.  In more compact form, $M =
{\precl} {\precd} {\precu}$, where $M, {\precl}, {\precd}, {\precu}$
are block matrices of size $N \times N$.  Let ${\precc} = {\precl} +
{\precd} + {\precu}$ and let $t = (t_1; t_2; \ldots t_N)$.  The blocks
are computed as
\begin{eqnarray}
\label{eq:blockM}
{\precc}_{ij} &=& 
\left\{
\begin{array}{ll}
A_{ij} \hspace{ .2cm}i= 1 \ or\ j=1 \\
A_{ij} - \sum_{k=1, {\precl}_{ik} \neq 0,{\precu}_{kj} \neq 0}^{\min(i,j)-1} {\precl}_{ik} \precf_{kj} {\precu}_{kj}, \hspace{.2cm} i > 1 \ or\ j > 1
\end{array}
\right.
\end{eqnarray}
where $\precf_{kj}$ is a sparse approximation of $\precd_{kk}^{-1}$ that satisfies
\begin{equation}
\label{eq:formbeta}
\precf_{kj} {\precu}_{kj} t_j = {\precd}_{kk}^{-1} {\precu}_{kj} t_j  \ \ for\ all\ k=1:\min(i,j)-1 \ with\ {\precl}_{ik} \neq 0, {\precu}_{kj} \neq 0
\end{equation}
\end{definition}

If ${\precu}_{kj} t_j$ is a vector of nonzero elements, a matrix
$\precf_{kj}$ that satisfies the condition in equation
\eqref{eq:formbeta} can be computed as
\[
\precf_{kj} = Diag (({\precd}_{kk}^{-1} {\precu}_{kj} t_j) ./ {\precu}_{kj} t_j )
\]
where $./$ is the pointwise division (see for example equation (15) in
\cite{Achdou:2007:LFT}).  However, for very sparse matrices,
${\precu}_{kj}$ can have rows of all zeros, and hence the result of
${\precu}_{kj} t_j$ can be a vector with zero elements.  We present in
section \ref{sec:constrbeta} a construction of $\precf_{kj}$ that
solves this problem.

The main idea in the design of the preconditioner in Definition
\ref{def:bfd} is to ensure that each block satisfies an appropriate
filtering condition $M_{ij} t_j = A_{ij} t_j$, such that the global
filtering condition $Mt = At$ is satisfied, where $t = (t_1; t_2;
\ldots t_N)$ is the filtering vector.  We note $B = M-A$, and so we
want to ensure that for each block $B_{ij} t_j = 0$.  This is
different from the approach used for block tridiagonal systems
\cite{Achdou:2007:LFT}, where $B = M-A$ is a block diagonal matrix.
The matrix $B = M-A$ is formed by $(B_{ij})_{1 \leq i,j \leq N}$, with
\begin{equation}
\label{eq:blokcsB}
B_{ij} = {\precc}_{ij} + \sum_{k=1, {\precl}_{ik} \neq 0,{\precu}_{kj} \neq 0}^{\min(i,j)-1} {\precl}_{ik} {\precd}_{kk}^{-1} {\precu}_{kj} - A_{ij}.
\end{equation}
The construction of $M$ ensures that for each block $B_{ij}$, for each
term ${\precl}_{ik} {\precd}_{kk}^{-1} {\precu}_{kj}$ of the summation
in Equation \ref{eq:blokcsB}, $\precf_{kj}$ is chosen such that the
filtering is satisfied.  That is, ${\precl}_{ik} {\precd}_{kk}^{-1}
{\precu}_{kj} t_j = {\precl}_{ik} \precf_{kj} {\precu}_{kj} t_j$.
From this the formula of $\precf_{kj}$ in Equation \ref{eq:formbeta}
is deduced. This ensures that the global filtering for the whole
matrix is satisfied.  Note that there is a $\precf_{kj}$ for each
nonzero block ${\precu}_{kj}$, that is the approximation of the
diagonal block depends on the off-diagonal blocks of ${\precu}$.  We
give a formal proof in the following lemma.

\begin{lemma}
\label{lem:DefBFD}
Consider an $n \times n$ matrix $A$ and a filtering vector $t$ of size
$n$. If the block filtering preconditioner $M$ as defined in
Definition \ref{def:bfd} exists, then it satisfies the filtering
property, that is $Mt = At$.
\end{lemma}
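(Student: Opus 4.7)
The plan is to work blockwise and show that for every pair $(i,j)$ the product $B_{ij} t_j$ vanishes, where $B = M - A$, and then sum over $j$ to obtain $(Bt)_i = 0$ for every $i$.

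First I would take the expression for $B_{ij}$ given in equation \eqref{eq:blokcsB} and substitute the definition of $\precc_{ij}$ from \eqref{eq:blockM}. For the boundary case $i=1$ or $j=1$ the defining sum in \eqref{eq:blockM} is empty because $\min(i,j)-1=0$, so $\precc_{ij}=A_{ij}$ and \eqref{eq:blokcsB} reduces to $B_{ij}=0$ trivially. For the remaining case $i>1$ and $j>1$, substitution collapses the two summations in \eqref{eq:blokcsB} into a single sum, yielding
\begin{equation*}
B_{ij} \;=\; \sum_{\substack{k=1 \\ \precl_{ik}\neq 0,\ \precu_{kj}\neq 0}}^{\min(i,j)-1} \precl_{ik}\bigl(\precd_{kk}^{-1} - \precf_{kj}\bigr)\precu_{kj}.
\end{equation*}

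Next I would right-multiply this identity by $t_j$ and push $t_j$ inside the sum. The filtering condition \eqref{eq:formbeta} that was built into the construction of $\precf_{kj}$ states precisely that $\precf_{kj}\precu_{kj}t_j = \precd_{kk}^{-1}\precu_{kj}t_j$ for every $k$ appearing in the summation, so each summand becomes $\precl_{ik}\cdot 0 = 0$. Hence $B_{ij}t_j = 0$ for all $i,j$.

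Finally I would assemble the block equations: writing $t=(t_1;\ldots;t_N)$, the $i$-th block of $Bt$ is $(Bt)_i = \sum_{j=1}^{N} B_{ij} t_j = 0$, which gives $Bt = 0$, i.e. $Mt = At$. There is no real obstacle here; the only care needed is the bookkeeping of the two index ranges in \eqref{eq:blockM} versus \eqref{eq:blokcsB}, and making sure the degenerate first-row/first-column case is handled separately so that the empty-sum convention is correctly invoked before appealing to \eqref{eq:formbeta}. The lemma is essentially a direct verification that the blockwise construction of $\precf_{kj}$ was designed to make exactly this cancellation happen.
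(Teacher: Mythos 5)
Your proposal is correct and follows essentially the same route as the paper: substitute \eqref{eq:blockM} into \eqref{eq:blokcsB}, collapse the two sums into $\sum_k \precl_{ik}(\precd_{kk}^{-1}-\precf_{kj})\precu_{kj}$, and invoke the filtering condition \eqref{eq:formbeta} to kill each summand. Your explicit handling of the $i=1$ or $j=1$ boundary case and the final summation over $j$ is slightly more careful bookkeeping than the paper provides, but it is the same argument.
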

\begin{proof}
The preconditioner $M$ satisfies the right filtering property if for
each nonzero block $B_{ij}$ we have $B_{ij} t_j = 0$, where $B_{ij}$
is of size $b_i \times b_j$ and $t_j$ is a vector of $b_j$
elements. In the formula of $B_{ij}$ from equation \eqref{eq:blokcsB},
we replace the expression of ${\precc}_{ij}$ from equation
\eqref{eq:blockM}.  We obtain:
\begin{eqnarray*}
B_{ij} t_j &=& \left( \sum_{k=1, {\precl}_{ik} \neq 0,{\precu}_{ki}
  \neq 0}^{\min(i,j)-1} {\precl}_{ik} {\precd}_{kk}^{-1} {\precu}_{kj}
- \sum_{k=1, {\precl}_{ik} \neq 0,{\precu}_{kj} \neq 0}^{\min(i,j)-1}
{\precl}_{ik} \precf_{kj} {\precu}_{kj} \right) t_j = \\
 &=& \left( \sum_{k=1, {\precl}_{ik} \neq 0,{\precu}_{ki} \neq
  0}^{\min(i,j)-1} {\precl}_{ik} 
{\precd}_{kk}^{-1} (I -{\precd}_{kk} \precf_{kj}  ) {\precu}_{kj}
\right) t_j = 0
\end{eqnarray*}
\end{proof}

We give now a definition of the block filtering preconditioner, in
which the inverse of a diagonal block matrix ${\precd}_{kk}$ is
approximated by $2 \precf_{kj} - \precf_{kj} {\precd}_{kk} \precf_{kj}$.
We show that if the matrix $\precf_{kj}$ satisfies the same condition
as in equation \eqref{eq:formbeta}, the preconditioner satisfies the
filtering property.
\begin{definition}
\label{def:bfd2}
A block filtering preconditioner $M$ of a matrix $A$ of size $n \times
n$ is defined by an incomplete block factorization 
{\small
\begin{equation}
     \label{eq:BFD2}
M =\left(\begin{array}{cccc}
       {\precd}_{11}    &  &          &          \\
       {\precl}_{21} & {\precd}_{22}    &   &          \\
         \vdots     & \ddots & \ddots   &  \\
       {\precl}_{N1}       & \ldots       & {\precl}_{N,N-1} & {\precd}_{NN}
       \end{array}
     \right)
\left(\begin{array}{cccc}
       {\precd}_{11}^{-1}    &  &          &          \\
        & {\precd}_{22}^{-1}    &   &          \\
              & & \ddots   &  \\
              &        &  & {\precd}_{NN}^{-1}
       \end{array}
     \right)
\left(\begin{array}{cccc}
       {\precd}_{11}     & {\precu}_{12} & \ldots  & {\precu}_{1N} \\
        & \ddots  & \ddots  & \vdots \\
        &  & {\precd}_{N-1,N-1}    & {\precu}_{N-1,N} \\
              &        &  & {\precd}_{NN}
       \end{array}
     \right).
\end{equation}
}
and a filtering vector $t$ of size $n$, where ${\precd}_{ii}, i = 1
\ldots N$ are square invertible matrices of size $b_i \times b_i$ with
$b_i < n$.  In more compact form, $M = {\precl} {\precd} {\precu}$,
where $M, {\precl}, {\precd}, {\precu}$ are block matrices of size $N
\times N$.  Let ${\precc} = {\precl} + {\precd} + {\precu}$ and let $t
= (t_1; t_2; \ldots t_N)$.  The blocks are computed as
\begin{eqnarray}
\label{eq:blockM2}
{\precc}_{ij} &=& 
\left\{
\begin{array}{ll}
A_{ij} \hspace{ .2cm}i= 1 \ or\ j=1 \\
A_{ij} - \sum_{k=1, {\precl}_{ik} \neq 0,{\precu}_{kj} \neq 0}^{\min(i,j)-1} {\precl}_{ik} (2 \precf_{kj} - \precf_{kj} {\precd}_{kk} \precf_{kj} ) {\precu}_{kj}, \hspace{.2cm} i > 1 \ or\ j > 1
\end{array}
\right.
\end{eqnarray}
where $\precf_{kj}$ is a sparse approximation of $\precd_{kk}$ that satisfies
\begin{equation}
\precf_{kj} {\precu}_{kj} t_j = {\precd}_{kk}^{-1} {\precu}_{kj} t_j
\end{equation}
\end{definition}

\begin{lemma}
\label{lem:DefBFD2}
Consider an $n \times n$ matrix $A$ and a filtering vector $t$ of size
$n$. If the block filtering preconditioner $M$ as defined in
Definition \ref{def:bfd2} exists, then it satisfies the filtering
property, that is $Mt = At$.
\end{lemma}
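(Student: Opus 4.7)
The plan is to follow the same reduction as in Lemma~\ref{lem:DefBFD}: show that the global identity $Mt = At$ decomposes into blockwise identities $B_{ij}t_j = 0$ for every nonzero block of $B = M-A$, and then verify each blockwise identity using the filtering condition \eqref{eq:formbeta} on the matrices $\precf_{kj}$. The only thing that really changes compared with Lemma~\ref{lem:DefBFD} is the scalar expression sitting between ${\precl}_{ik}$ and ${\precu}_{kj}$, since the diagonal inverse is now approximated by $2\precf_{kj} - \precf_{kj}{\precd}_{kk}\precf_{kj}$ rather than by $\precf_{kj}$ alone.

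Concretely, I would first substitute the new formula \eqref{eq:blockM2} for ${\precc}_{ij}$ into the expression \eqref{eq:blokcsB} for $B_{ij}$. The $A_{ij}$ terms cancel, and I am left with
\[
B_{ij}t_j \;=\; \sum_{k}\, {\precl}_{ik}\!\left[\,{\precd}_{kk}^{-1} - \bigl(2\precf_{kj} - \precf_{kj}{\precd}_{kk}\precf_{kj}\bigr)\right]\!{\precu}_{kj}\, t_j,
\]
the sum being taken over those $k < \min(i,j)$ for which ${\precl}_{ik}$ and ${\precu}_{kj}$ are nonzero.

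The key algebraic step is to recognize the bracketed operator as a perfect square: expanding $(I - X)^2 = I - 2X + X^2$ with $X = {\precd}_{kk}\precf_{kj}$ and multiplying on the left by ${\precd}_{kk}^{-1}$ gives
\[
{\precd}_{kk}^{-1} - \bigl(2\precf_{kj} - \precf_{kj}{\precd}_{kk}\precf_{kj}\bigr) \;=\; {\precd}_{kk}^{-1}\bigl(I - {\precd}_{kk}\precf_{kj}\bigr)^{2}.
\]
Noticing this factorization is, in my view, the only step requiring a small insight; everything else is a direct transposition of the argument used for Lemma~\ref{lem:DefBFD}.

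To finish, I would invoke the filtering condition on $\precf_{kj}$, rewriting $\precf_{kj}{\precu}_{kj}t_j = {\precd}_{kk}^{-1}{\precu}_{kj}t_j$ in the equivalent form $(I - {\precd}_{kk}\precf_{kj}){\precu}_{kj}t_j = 0$. Applying the operator $(I - {\precd}_{kk}\precf_{kj})$ a second time preserves this zero, so the bracketed square annihilates ${\precu}_{kj}t_j$, each term of the summation vanishes, and therefore $B_{ij}t_j = 0$ for every $i,j$. This yields $Bt = 0$, i.e.\ $Mt = At$, as claimed. No auxiliary lemma beyond the filtering relation \eqref{eq:formbeta} is needed, and the reasoning carries over verbatim to any higher-order iterate of the form $\precd_{kk}^{-1} - [\,I - (I-\precd_{kk}\precf_{kj})^{p}\,]\precd_{kk}^{-1}$ should such variants later be considered.
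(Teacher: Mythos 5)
Your proposal is correct and follows essentially the same route as the paper: substitute \eqref{eq:blockM2} into \eqref{eq:blokcsB}, cancel $A_{ij}$, and factor the middle operator so that the filtering condition annihilates each term. Your factorization ${\precd}_{kk}^{-1}\bigl(I-{\precd}_{kk}\precf_{kj}\bigr)^{2}$ is algebraically identical to the paper's $(\precf_{kj}{\precd}_{kk}-I)\,{\precd}_{kk}^{-1}\,({\precd}_{kk}\precf_{kj}-I)$, differing only by a rearrangement of signs.
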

\begin{proof}
We use the same approach as in Lemma \ref{lem:DefBFD}.  The
preconditioner $M$ satisfies the right filtering property if for each
nonzero block $B_{ij}$ we have $B_{ij} t_j = 0$. In the formula of
$B_{ij}$ from equation \eqref{eq:blokcsB}, we replace the expression
of ${\precc}_{ij}$ from equation \eqref{eq:blockM2}.  We obtain:
\begin{eqnarray*}
B_{ij} t_j &=& \left( \sum_{k=1, {\precl}_{ik} \neq 0,{\precu}_{ki}
  \neq 0}^{\min(i,j)-1} {\precl}_{ik} {\precd}_{kk}^{-1} {\precu}_{kj}
- \sum_{k=1, {\precl}_{ik} \neq 0,{\precu}_{kj} \neq 0}^{\min(i,j)-1}
{\precl}_{ik} (2 \precf_{kj} - \precf_{kj} {\precd}_{kk} \precf_{kj} )
{\precu}_{kj} \right) t_j = \\
 &=& \left( \sum_{k=1, {\precl}_{ik} \neq 0,{\precu}_{ki} \neq
  0}^{\min(i,j)-1} {\precl}_{ik} (\precf_{kj} {\precd}_{kk} - I)
{\precd}_{kk}^{-1} ({\precd}_{kk} \precf_{kj}  - I ) {\precu}_{kj}
\right) t_j = 0
\end{eqnarray*}
\end{proof}

\section{Construction of the approximation}
\label{sec:constrbeta}

We describe the construction of the approximation matrices
$\precf_{kj}$.  We denote the element in position $(i,j)$ of a matrix
$A$ as $A(i,j)$ and the element in position $i$ of a vector $v$ as
$v(i)$.

The block filtering preconditioner defined in Definitions
\ref{def:bfd} and \ref{def:bfd2} requires the construction of matrices
$\precf_{kj}$ that satisfy the equation \eqref{eq:formbeta}, that is
$\precf_{kj} {\precu}_{kj} t_j = {\precd}_{kk}^{-1} {\precu}_{kj} t_j$
We note in the following $M_{kj} t_j = v_{kj}$ and ${\precd}_{kk}^{-1}
{\precu}_{kj} t_j = u_{kj}$ , where $v_{kj}, u_{kj}$ are vectors of
$b_k$ elements. Hence we have $\precf_{kj} v_{kj} = u_{kj}$.  In the
following, for the ease of understanding, we simplify the notation and
discuss the relation $\precf v = u$.  The approach used previously for
the construction of $\precf$ is to compute it as
\[
\precf = Diag(u ./ v)
\]
where $./$ is pointwise division.

For sparse matrices, the vector $v$ can have zero elements. Possibly
$u$ can have only zero elements, but this case is simple to solve.  We
discuss the case of $v$ having zero elements.  If $v$ has only zero
elements, then $u$ is also zero, and hence the relation $\precf v = u$
is satisfied.  We discuss hence the case when there is at least a
nonzero element in $v$. Let $j$ be the index of a nonzero element,
that is $v(j) \neq 0$.  If $v(i) = 0$, then we take $\precf (i,j) = u(j)
/ v(j)$.  In other words, a simple construction of the matrix $\precf$
is as follows:
\begin{eqnarray}
\label{eq:Beta1}
\precf(i,j) &=& 
\left\{
\begin{array}{ll}
u(i) / v(i) \hspace{ .2cm} if\ i=j\ and\ v(i) \neq 0  \\
u(i) / v(j)  \hspace{ .2cm} if\ v(i) = 0 \ and\ j =
\min_{k: v(k) \neq 0}{\vert k-i \vert } \\ 
0 \hspace{ .2cm} otherwise \\
\end{array}
\right.
\end{eqnarray}

An example of construction of $\precf$ is as follows:
\[
\begin{pmatrix}
0 & u(1)/v(2) & & & & \\
 & u(2)/v(2) & & & & \\
 & u(3)/v(2) & 0 & & & \\
 &   &  & 0 & u(4)/v(5) & \\
 &   &  &  &  u(5)/v(5) & \\
 &   &  &  &  u(6)/v(5) & 0 \\
\end{pmatrix}
\cdot 
\begin{pmatrix}
0 \\
v(2) \\
0 \\
0 \\
 v(5) \\
0 \\
\end{pmatrix}
= 
\begin{pmatrix}
u(1) \\
u(2) \\
u(3) \\
u(4) \\
u(5) \\
u(6) \\
\end{pmatrix}
\]

The matrix $\precf$ can be easily constructed to be symmetric by
letting $\precf (j,i) = \precf(i,j)$.  But $\precf$ might not be SPD.


We can also use deflation techniques \cite{Nicolaides:1987:DCJ,
  MR1766015, MR2506657, MR2413778} to construct $\precf_{kj}$ that
satisfies the equation \eqref{eq:formbeta}, that is $\precf_{kj}
{\precu}_{kj} t_j = {\precd}_{kk}^{-1} {\precu}_{kj} t_j$.  Equation
\eqref{eq:deflation} defines $\precf_{kj}$ for a symmetric matrix.

\begin{eqnarray}
\label{eq:deflation}
\precf_{kj} &=& P+Q \\
P &=& I - QA \\
Q &=& Z E^{-1} Z^T \\
E &=& (Z^T \precd_{kk} Z)^{-1} \\
Z &=& \precu_{kj} t_j 
\end{eqnarray}

\subsection{Suitability for Parallelism}

The block filtering preconditioner was defined in a general way.  With
a suitable ordering, a parallel preconditioner can be obtained.  In
our work, we focus on matrices partitioned using nested dissection.
This partitioning leads to algorithms that can be implemented in
parallel.  We describe here briefly this ordering.  Nested dissection
considers the undirected graph $G$ of a symmetric matrix $A$.  It
identifies a separator $S$ that partitions the graph into two
disconnected graphs $G_1$, $G_2$.  The input matrix is permuted such
that the vertices corresponding to the separator $S$ are ordered after
the vertices corresponding to the two disconnected graphs $G_1,
G_2$. Then the same partitioning can be applied on the two
disconnected graphs, with the recursion beind stopped when the number
of desired independent parts has been reached.

Consider a matrix $A$ of size $n \times n$ partitioned using nested
dissection into a block matrix of size $N \times N$.  The following
example displays the result obtained after applying two steps of
nested dissection that leads to a block matrix of size $7 \times 7$.
{\small
\[
P A P^T = 
  \begin{pmatrix}
    A_{11} &   & A_{13} & & & & A_{17} \\
      & A_{22} & A_{23} & & & & A_{27} \\
    A_{31} & A_{32} & A_{33} & & & & A_{37} \\
 & & & A_{44} & & A_{46} & A_{47} \\
 & & & & A_{55} & A_{56} & A_{57} \\
 & & & A_{64} & A_{65} & A_{66} & A_{67} \\
A_{71} & A_{72} & A_{73} & A_{74} & A_{75} & A_{76} & A_{77} \\   
  \end{pmatrix},
\]
}
The preconditioner $M$ is defined as
{\small
\begin{equation}
M = 
\begin{pmatrix}
{\precd}_{11} & & & & & & \\
 & {\precd}_{22} & & & & & \\
{\precl}_{31} & {\precl}_{32} & {\precd}_{33} & & & & \\
 & & &{\precd}_{44} & & & \\
 & & & & {\precd}_{55} & & \\
 & & & {\precl}_{64} & {\precl}_{65} & {\precd}_{66} & \\
{\precl}_{71} & {\precl}_{72} & {\precl}_{73} & {\precl}_{74} & {\precl}_{75} & {\precl}_{76} & {\precd}_{77} \\
\end{pmatrix}
({\precd}_{ii}^{-1})_{i=1:7}
\begin{pmatrix}
{\precd}_{11} & & {\precu}_{13} & & & & {\precu}_{17} \\
 & {\precd}_{22} & {\precu}_{23} & & & & {\precu}_{27} \\
 &  & {\precd}_{33} & & & &  {\precu}_{37} \\
 & & & {\precd}_{44} & & {\precu}_{46} &  {\precu}_{74} \\
 & & & & {\precd}_{55} & {\precu}_{56} & {\precu}_{57} \\
 & & &  &  & {\precd}_{66} & {\precu}_{67} \\
 & & & &  &  & {\precd}_{77} \\
\end{pmatrix}
\end{equation}
} 
where each block of the factors $\precl, \precd, \precu$ can be
computed following Definitions \ref{def:bfd} or \ref{def:bfd2}.  With
this partition, both the preconditioner and the iterative process can
be implemented in parallel.  

\section{Conclusions}
\label{sec:concl}

In this report we have briefly presented a block filtering
preconditioner $M$ that is build from an input matrix $A$ and a
filtering vector $t$ and satisfies the property $M t = At$.  With an
appropriate ordering as nested dissection, this preconditioner is
suitable for parallel implementations.  A future paper will focus on
numerical results on scalar of PDEs discretized on two-dimensional and
three-dimensional structured and unstructured grids showing that this
method is efficient in practice.

\bibliographystyle{plain}
\bibliography{paperTFFD2004}

\begin{thebibliography}{10}

\bibitem{MR2008372}
Y.~Achdou and F.~Nataf.
\newblock An iterated tangential filtering decomposition.
\newblock {\em Numer. Linear Algebra Appl.}, 10(5-6):511--539, 2003.
\newblock Preconditioning, 2001 (Tahoe City, CA).

\bibitem{Achdou:2007:LFT}
Y.~Achdou and F.~Nataf.
\newblock Low frequency tangential filtering decomposition.
\newblock {\em Numer. Linear Algebra Appl.}, 14(2):129--147, 2007.

\bibitem{Benzi:1999:ACS}
Michele Benzi and Miroslav Tuma.
\newblock A comparative study of sparse approximate inverse preconditioners.
\newblock {\em Appl. Num. Math.}, 30:305--340, 1999.

\bibitem{MR1370108}
D.~Braess.
\newblock Towards algebraic multigrid for elliptic problems of second order.
\newblock {\em Computing}, 55(4):379--393, 1995.

\bibitem{MR2000f:65003}
G.~Meurant.
\newblock {\em Computer solution of large linear systems}.
\newblock North-Holland Publishing Co., Amsterdam, 1999.

\bibitem{MR2413778}
R.~Nabben and C.~Vuik.
\newblock A comparison of abstract versions of deflation, balancing and
  additive coarse grid correction preconditioners.
\newblock {\em Numer. Linear Algebra Appl.}, 15(4):355--372, 2008.

\bibitem{Nicolaides:1987:DCJ}
R.~A. Nicolaides.
\newblock Deflation of conjugate gradients with applications to boundary value
  problems.
\newblock {\em SIAM J. Numer. Anal.}, 24(2):355--365, 1987.

\bibitem{MR972756}
J.~W. Ruge and K.~St{\"u}ben.
\newblock Algebraic multigrid.
\newblock In {\em Multigrid methods}, volume~3 of {\em Frontiers Appl. Math.},
  pages 73--130. SIAM, Philadelphia, PA, 1987.

\bibitem{Saad:1996:IMS}
Y.~Saad.
\newblock {\em Iterative Methods for Sparse Linear Systems}.
\newblock PWS Publishing Company, 1996.

\bibitem{MR1766015}
Y.~Saad, M.~Yeung, J.~Erhel, and F.~Guyomarc'h.
\newblock A deflated version of the conjugate gradient algorithm.
\newblock {\em SIAM J. Sci. Comput.}, 21(5):1909--1926 (electronic), 2000.
\newblock Iterative methods for solving systems of algebraic equations (Copper
  Mountain, CO, 1998).

\bibitem{MR1820878}
K.~St{\"u}ben.
\newblock A review of algebraic multigrid.
\newblock {\em J. Comput. Appl. Math.}, 128(1-2):281--309, 2001.
\newblock Numerical analysis 2000, Vol. VII, Partial differential equations.

\bibitem{MR2506657}
J.~M. Tang, R.~Nabben, C.~Vuik, and Y.~A. Erlangga.
\newblock Comparison of two-level preconditioners derived from deflation,
  domain decomposition and multigrid methods.
\newblock {\em J. Sci. Comput.}, 39(3):340--370, 2009.

\bibitem{MR2104179}
A.~Toselli and O.~Widlund.
\newblock {\em Domain decomposition methods---algorithms and theory}, volume~34
  of {\em Springer Series in Computational Mathematics}.
\newblock Springer-Verlag, Berlin, 2005.

\bibitem{MR1807961}
U.~Trottenberg, C.~W. Oosterlee, and A.~Sch{\"u}ller.
\newblock {\em Multigrid}.
\newblock Academic Press Inc., San Diego, CA, 2001.
\newblock With contributions by A. Brandt, P. Oswald and K. St\"uben.

\bibitem{MR1835471}
P.~Van{\v{e}}k, M.~Brezina, and J.~Mandel.
\newblock Convergence of algebraic multigrid based on smoothed aggregation.
\newblock {\em Numer. Math.}, 88(3):559--579, 2001.

\bibitem{Wagner:1997:TFFU}
C.~Wagner.
\newblock Tangential frequency filtering decompositions for unsymmetric
  matrices.
\newblock {\em Numer. Math.}, 78(1):143--163, 1997.

\bibitem{Wagner:1997:AF}
C.~Wagner and G.~Wittum.
\newblock Adaptive filtering.
\newblock {\em Numer. Math.}, 78(2):305--328, 1997.

\end{thebibliography}

\end{document}